\documentclass[letterpaper, 10 pt, conference]{ieeeconf}


\usepackage{flushend}

\usepackage{amssymb}
\usepackage[cmex10]{amsmath}
\usepackage{algorithm}
\usepackage{algpseudocode}
\usepackage{graphicx}
\usepackage{color}
\usepackage{subcaption}

\IEEEoverridecommandlockouts
\overrideIEEEmargins

\newcommand{\mR}{\ensuremath{\mathbb{R}}}
\newcommand{\Lzi}[1]{\ensuremath{L_{[1,#1)}}}
\newcommand{\Lij}[2]{\ensuremath{L_{(#1,#2)}}}
\newcommand{\Ljn}[1]{\ensuremath{L_{(#1,n]}}}

\newcommand{\Merr}[1]{\ensuremath{R(#1)}}

\newcommand{\xt}[1]{\ensuremath{\overline{x}_{#1}}}
\newcommand{\xtv}{\ensuremath{\overline{\textbf{x}}}}
\newcommand{\Lff}{\ensuremath{L_{\textit{ff}}}}

\newcommand{\pos}[2]{\ensuremath{p_{#1#2}}}

\newcommand{\Mmax}{\ensuremath{R_{\text{max}}}}
\newcommand{\Mopt}{\ensuremath{R^*}}
\newcommand{\Ne}[1]{\ensuremath{N(#1)}}
\newcommand{\expec}[1]{\textbf{E}\left[#1\right]}
\newcommand{\tr}[1]{\textbf{tr}\left(#1\right)}

\newcommand{\eqdef}{\ensuremath{\triangleq}}
\newtheorem{theorem}{Theorem}

\newtheorem{problem}{Problem Statement}

\newcommand{\Deg}[1]{\ensuremath{\Delta_{#1}}}

\newcommand{\Gb}{\ensuremath{\overline{G}}}
\newcommand{\Eb}{\ensuremath{\overline{E}}}
\newcommand{\Vb}{\ensuremath{\overline{V}}}

\newcommand{\Gbb}{\ensuremath{\overline{G}_i}}
\newcommand{\Ebb}{\ensuremath{\overline{E}_i}}
\newcommand{\Vbb}{\ensuremath{\overline{V}_i}}
\newcommand{\Wbb}{\ensuremath{\overline{W}_i}}

\newcommand{\Mod}[1]{\ (\text{mod}\ #1)}

\title{\LARGE \bf
Efficient, Optimal $k$-Leader Selection for Coherent, \\
One-Dimensional Formations
}

\author{Stacy Patterson$^{1}$,  Neil McGlohon$^{1}$, and Kirill Dyagilev$^{2}$
\thanks{$^{1}$S. Patterson and N. McGlohon are with the Department of Computer Science, Rensselaer Polytechnic Institute,
Troy, NY 12180, USA
        {\tt\small sep@cs.rpi.edu}, {\tt\small mcglon@rpi.edu}}%
\thanks{$^{2}$K. Dyagilev is with the Department of Computer Science, Johns Hopkins University,
        Baltimore, MD 21218, USA
        {\tt\small kirilld@cs.jhu.edu}}%
}

\begin{document}

\maketitle
\thispagestyle{empty}
\pagestyle{empty}

\begin{abstract}

We study the problem of optimal leader selection in consensus networks with noisy relative information. The objective is to identify the set of $k$ leaders that minimizes the formation's deviation from the desired trajectory established by the leaders.  An optimal leader set can be found by an exhaustive search over all possible leader sets; however, this approach is not scalable to large networks. In recent years, several works have proposed approximation algorithms to the $k$-leader selection problem, yet the question of whether there exists an efficient, non-combinatorial method to identify the optimal leader set remains open. This work takes a first step towards answering this question.  We show that, in one-dimensional weighted graphs, namely path graphs and ring graphs, the $k$-leader selection problem can be solved in polynomial time (in both $k$ and the network size $n$). We give an $O(n^3)$ solution for optimal $k$-leader selection in path graphs and an $O(kn^3)$ solution for optimal $k$-leader selection in ring graphs.
\end{abstract}

\section{INTRODUCTION}
We explore the problem of leader selection in leader-follower consensus systems.  Such systems arise in the context of
vehicle formation control~\cite{RBM05}, distributed clock synchronization~\cite{EKPS04}, and distributed localization in sensor networks~\cite{BH09}, among others.
In these systems, several agents act as \emph{leaders} whose states serve as the reference trajectory for the entire system.  The leaders may be controlled autonomously or by a system owner.
The remaining agents are \emph{followers}.  Each follower updates its state based on noisy measurements of the states of its neighbors.
The objective of the leader-follower system is for the entire formation to maintain a desired global state.

Since the follower agents' measurements are corrupted by stochastic noise, the agents cannot maintain the formation exactly.  However,
the variance of the deviation of the agents' states from the desired states is bounded~\cite{BH06}.  This variance is related to the \emph{coherence} of the formation,
and is quantified by an $H_2$ norm of the leader-follower system~\cite{PB10,B12}.  It has been shown that the coherence of such leader-follower consensus systems
 depends  on which agents act as leaders~\cite{BH06,PB10}, and so, by judiciously choosing the leader set, one can minimize the total variance of the formation.
  The \emph{$k$-leader selection problem} is precisely to select a set of at most $k$ leaders for which the total variance of the deviation of the follower nodes is minimized.

The problem of finding a leader set that minimizes the total variance of the deviation has received attention in recent years.
The optimal leader set can be found by an exhaustive search over all subsets of agents of size at most $k$, but this solution is not tractable in large networks.
In~\cite{LFJ14}, the authors present a leader selection algorithm based on a convex-relaxation of the problem.   They then give an efficient algorithm for this relaxed problem.
While this algorithm offers no guarantees on the optimality of the chosen leader set, its good performance was demonstrated on example networks.
 In~\cite{CBP14}, the authors show that the total variance of the deviation from the desired trajectory is a super-modular set function~\cite{N78}.
 This super-modularity property implies that one can use a greedy, polynomial-time algorithm to find a leader for which the total variance is within a provable bound of optimal.
Another recent work~\cite{FL13} has shown a connection between the optimal leader set and the information centrality measures.  They have used this connection to give efficient algorithms for finding
the optimal single leader and optimal pair of leaders in several network topologies.
Other works have explored optimal leader selection in leader-follower systems without stochastic disturbances~\cite{CABP14} and in systems where both
the leaders and followers are subject to stochastic disturbances and the leaders also have access to relative state information~\cite{LFJ14}.
Despite this recent interest, the question of whether it is possible to find an optimal leader set using a non-combinatorial approach remains open.

In this work, we take a first step towards answering this question.  We show that, in one-dimensional weighted graphs, namely path graphs and ring graphs, the $k$-leader selection problem
can be solved in time that is polynomial in both $k$ and the network size $n$.
Our approach is to first transform the $k$-leader selection problem into the problem of finding a minimum-weight path
in a weighted directed graph, a problem that can be solved polynomial time.
We then use a slightly modified version of the well-known Bellman-Ford algorithm~\cite{CLRS10}
to find this minimum-weight path and thus the optimal leader set.  For path graphs, our algorithm finds the optimal leader set of size at most $k$ in
 $O(n^3)$ time.  In ring graphs, our algorithm finds the optimal leader set of size at most $k$ in $O(kn^3)$ time.

Our approach was inspired by recent work on facility location on the real line~\cite{CW14}.
In this work, the authors solve the facility location problem by reducing it to a minimum-weight path problem over a directed acyclic graph.
We note that our graph construction and our path-finding algorithm are both different from those used for the facility location problem.

The remainder of this paper is organized as follows.  In Section~\ref{problem.sec}, we present the system model and formalize the $k$-leader selection problem.
In Section~\ref{algorithm.sec}, we present our  algorithms for finding the optimal leader set in path and ring graphs.
Section~\ref{examples.sec} gives computational examples comparing the optimal leader set to the leader set selected by the greedy algorithm.
Finally, we conclude in Section~\ref{conclusion.sec}.

\section{Problem Formulation} \label{problem.sec}

In this section, we describe the dynamics of leader-follower consensus system and formally define the $k$-leader selection problem.

\subsection{System Model}
We consider a network of agents, modeled by an undirected, connected graph $G=(V,E)$, where the node set $V = \{1, 2, \ldots, n\}$ represents the
agents and the edge set $E$ captures the communication structure of a network.
We use the terms node and agent interchangeably.
If $(i,j) \in E$, then nodes $i$ and $j$ can exchange information.We denote the neighbor set of a node $i$ by $\Ne{i}$, i.e., $\Ne{i} \eqdef \{ j \in V~|~(i,j) \in E\}$.
In this work, we restrict our study to one-dimensional graphs,
namely path graphs and ring graphs.
For path graphs, we assume the nodes IDs are assigned in order along the path.
For ring graphs, we assume that the node IDs are assigned in ascending order around the ring in a clockwise fashion.

Each agent $i$ has a state $x_i(t) \in \mR$ which represents, for example, the position of
agent $i$ at time $t$.  The objective is for each pair of neighbor agents $i$ and $j$ to maintain a pre-specified difference $\pos{i}{j}$ between their states,
\begin{equation} \label{diff.eq}
x_i(t) - x_j(t) = \pos{i}{j}~~~~\text{for all}~(i,j) \in E.
\end{equation}
If $x_i(t)$ represents an agent's position, $\pos{i}{j}$ is the desired distance between agents $i$ and $j$.
We assume that each agent $i$ knows $\pos{i}{j}$ for all $j \in \Ne{i}$.

A subset $S \subset V$ of agents act as leaders.  The  states of these agents
serve as reference states for the network.   The state of each leader $s \in S$ remains fixed at its reference value
$\xt{s}$.
 Let $\xtv$ denote the vector of states that satisfy  (\ref{diff.eq}) when the leader states are fixed at their reference values.
We assume that at least one such $\xtv$ exists.

The remaining agents $v \in V \setminus S$ are followers.
A follower  updates its state based on noisy measurements of the differences between its state and the states
of its neighbors. The dynamics of each follower agent is given by
\[
\dot{x}_i(t) = -\sum_{j \in \Ne{i}} W_{ij}  \left(x_i(t) - x_j(t) - \pos{i}{j}+ \epsilon_{ij}(t)\right),
\]
where $W_{ij}$ is the weight for link $(i,j)$ and
 $\epsilon_{ij}(t)$, $(i,j) \in E$, are  zero-mean white noise processes with autocorrelation functions
$\expec{\epsilon_{ij}(t)\epsilon_{ij}(t + \tau)} = \nu_{ij} \delta(\tau)$.  Here $\delta(\cdot)$ denotes the unit impulse function.
 Each $\epsilon_{ij}$ is independent, and
$\epsilon_{ij}$  and $\epsilon_{ji}$ are identically distributed.
As in~\cite{CBP14}, we select the edges weights as  $W_{ij} = \frac{\nu_{ij}}{\Deg{i}}$, where $\Deg{i} = \sum_{j \in \Ne{i}}(1/\nu_{ij})$.
 This edge weight policy  corresponds to the best linear unbiased estimator of the leader agents' states when $x_j(t) = \xt{j}$ for all $j \in \Ne{i}$~\cite{BH06}.

  \subsection{Performance Measure}
  Without the noise processes $\epsilon_{ij}$, the agents' states would converge to $\xtv$.
With these noise processes, the agents' states deviate from $\xtv$; however, the variances of these deviations are bounded in the mean-square sense.
For a follower agent $i$, let $r_i$ be the steady-state variance of the deviation from $\xt{i}$,
\[
r_i \eqdef \lim_{t \rightarrow \infty} \expec{(x_i(t) - \xt{i})^2}.
\]
The steady-state variances $r_i$,  $i \in V \setminus S$ can be obtained from the weighted Laplacian matrix of the graph $G$,
whose components are defined as
\[
L_{ij} = \left\{ \begin{array}{ll}
- \frac{1}{\nu_{ij}} & ~\text{if}~(i,j) \in E \\
\Deg{i} &~\text{if}~i=j \\
0 &~\text{otherwise.}
\end{array} \right.
\]
For a leader set $S$, let the follower-follower sub-matrix $\Lff$ be the matrix $L$ with the rows and columns corresponding
nodes in $S$ removed.
It has been shown that~\cite{BH06},
\begin{equation} \label{ri.eq}
r_i = \frac{1}{2} (\Lff^{-1})_{ii}.
\end{equation}
We note that since $G$ is connected, $\Lff$ is positive definite~\cite{PB10}, and therefore, (\ref{ri.eq}) is well-defined.

We measure the performance of the leader-follower system for a given leader set $S$ by the total steady-state variance of the deviation from $\xtv$,
\begin{equation} \label{Merr.eq}
\Merr{S} \eqdef \sum_{i \in V \setminus S} r_i = \frac{1}{2} \tr{\Lff^{-1}}
\end{equation}
A formation with a small $\Merr{S}$ exhibits good coherence, i.e., the formation closely resembles a rigid formation.

\subsection{The $k$-Leader Selection Problem}
A natural question that arises is how to identify a leader set $S$ of a certain size that minimizes the total steady-state variance in (\ref{Merr.eq}).
This question is formalized as the \emph{$k$-leader selection problem}.
\begin{problem}
The \emph{$k$-leader selection problem} is
\begin{equation} \label{problem.eq}
\begin{array}{ll}
\text{minimize}~& \Merr{S} \\
\text{subject to}~& |S|  \leq k
\end{array}
\end{equation}
\end{problem}
\vspace{.1cm}

A na\"{\i}ve solution to this problem is to construct all subsets of $V$ of size at most $k$, evaluate $R(S)$ for each leader set,
and choose the set $S^*$ for which $R(S)$ in minimized.  The computational complexity of this solution is combinatorial since the number of leader sets that would need to be evaluated is
$\sum_{i=1}^k {n \choose i}$.  In the next section, we show that for one-dimensional graphs, the optimal leader set can be found with time complexity that is polynomial in both $n$ and $k$.

\begin{figure}
\begin{center}
\includegraphics[scale=.42]{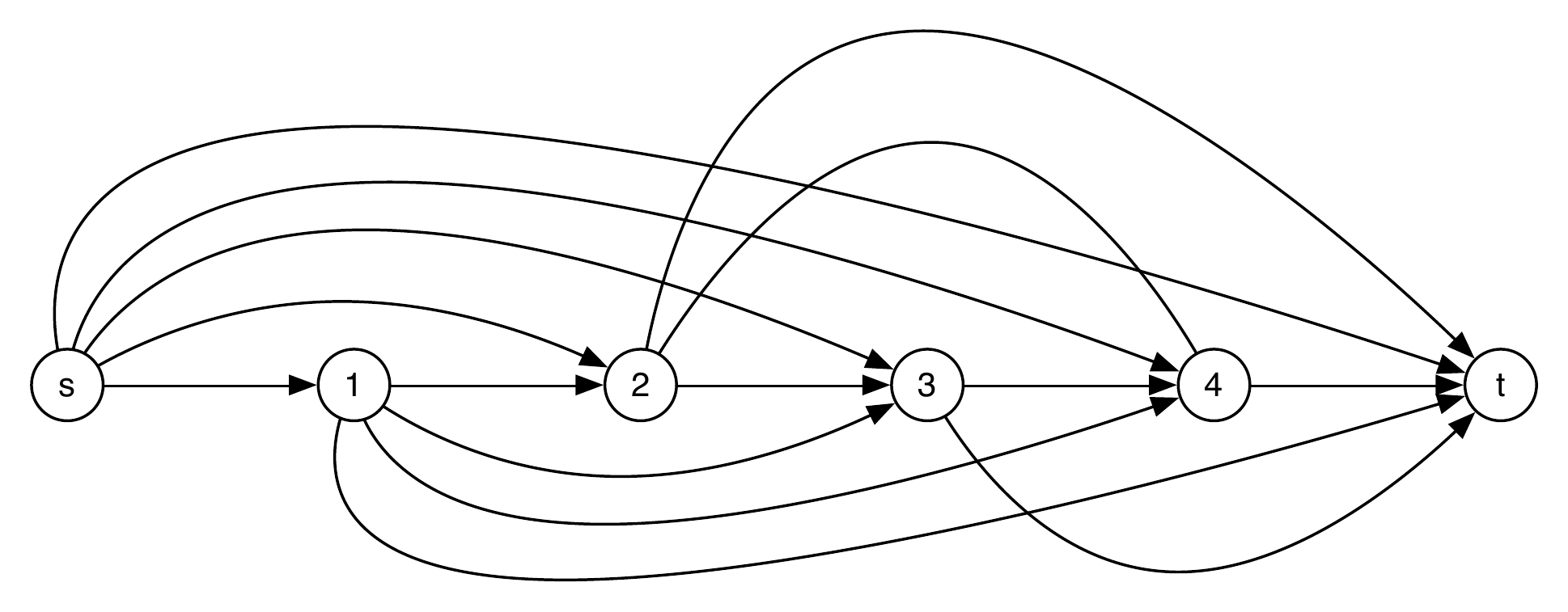}
\end{center}
\caption{Digraph generated from four node path graph.}  \label{dag.fig}
\end{figure}

\section{Leader Selection Algorithm} \label{algorithm.sec}

In this section, we present our leader selection algorithms for path and ring graphs.
Our approach for both graph types is to first reduce the leader selection problem to the problem of finding a minimum-weight path in a
digraph.  We then solve this minimum-weight path problem using a modified version of the
Bellman-Ford algorithm~\cite{CLRS10}.

\begin{algorithm}
\caption{Algorithm for finding the optimal solution to the $k$-leader selection problem in a path graph.} \label{optpath.alg}
\begin{algorithmic}
\State \textbf{Input:} $G = (V,E)$, edge weights $\frac{1}{\nu_{ij}}$, $k$
\State \textbf{Output:} Set of leader nodes $S$, error $R(S)$
~\\
\State $L \gets$ Laplacian of $G$
\State $\Gb \gets $ digraph constructed from $L$
\State $(P, w) \gets \textsc{ModifiedBellmanFord}(\Gb, s, t, k+1)$
~\\
\State \textbf{Construct leader set from min. weight path:}
\State $S \gets \emptyset$
\For{$v \in P$}
\If{$v \neq s$ \text{and} $v \neq t$}
\State $S \gets S \cup \{v\}$
\EndIf
\EndFor
\State $R(S) \gets w$
\State \textbf{return} $(S, R(S))$
\end{algorithmic}
\end{algorithm}

\subsection{Leader Selection for a Path Graph}

We first note that, in a path graph with $k$ leader nodes, the matrix $\Lff$ is block diagonal with at most $k+1$ blocks.  Further,
each block is tridiagonal.
For example, consider a path graph with leader set $S = \{ \ell_1, \ell_2, \ldots, \ell_k\}$, where $\ell_i < \ell_{i+1}$ for $i=1, \ldots, (k-1)$.
The matrix $\Lff$ is formed from  the weighted Laplacian $L$ by removing the rows and columns corresponding to the nodes in $S$.
$\Lff$ can be written as,
\[
\Lff = \left[  \begin{array}{ccccc}
\Lzi{\ell_1} & \textbf{0} & \cdots & \cdots & \textbf{0} \\
\textbf{0} & \Lij{\ell_1}{\ell_2} & \textbf{0} & \cdots & \vdots \\
\vdots &  \ddots & \ddots & \ddots &   \vdots \\
\textbf{0} & \ddots & \ddots & \Lij{\ell_{k-1}}{\ell_k} & \textbf{0} \\
\textbf{0} & \cdots & \cdots & \textbf{0} & \Ljn{\ell_k}
\end{array} \right] ,
\]
where the matrices $\Lzi{\ell_1}$, $\Lij{\ell_i}{\ell_{i+1}}$, $i=1, \ldots ,(k-1)$, and $\Ljn{\ell_2}$ are defined as follows.
$\Lzi{\ell_1}$ is the sub-matrix of $L$ consisting of the rows and columns $1$ through $\ell_1-1$ of $L$.
This matrix models the interactions between the follower nodes, $i =1, \ldots, (\ell_1-1)$.
Note that node $\ell_1$ is the only leader that affects the states of these nodes.
$\Lij{\ell_i}{\ell_{i+1}}$  is the sub-matrix of $L$ consisting of the rows and columns indexed from $\ell_i+1$ to $\ell_{i+1}-1$, inclusive.
This matrix models the interactions of the nodes between leader node $\ell_i$ and leader node $\ell_{i+1}$;
these follower nodes are influenced by both leaders.
Finally, $\Ljn{\ell_k}$ consists of the rows and columns $i=(\ell_k+1),\ldots, n$ of $L$, inclusive, and models the interactions of the follower nodes $i=(\ell_k+1), \ldots, n$.
Leader node $\ell_k$ is the only leader that influences the states of these nodes.
If there are leaders $u, v \in S$ such that $v = u+1$, then the corresponding sub-matrix of $L$ will be of size 0.

The corresponding error $R(S)$ is  given by
\begin{align} \label{R1d.eq}
R(S) &= \frac{1}{2} \tr{{\Lzi{\ell_1}}^{-1}} + \frac{1}{2} \sum_{i=1}^{k-1} \tr{ {\Lij{\ell_i}{\ell_{i+1}}}^{-1}}  \\
&~~~~~+ \frac{1}{2} \tr{{\Ljn{\ell_k}}^{-1}} \nonumber
\end{align}
Note that, if any sub-matrix of $L$ is of size 0, then the trace of the inverse of this sub-matrix is 0.  Similarly, $\tr{{\Lzi{1}}^{-1}} = 0$ and $\tr{{\Ljn{n}}^{-1}} = 0$.
By decomposing $R(S)$ in this manner, we observe that, for a given leader set $S$, the total steady-variance can be computed by finding the trace of the inverse of the
$k+1$ blocks.

We now construct a weighted digraph $\Gb = (\Vb, \Eb, W)$ based on $L$ as follows.
The set of nodes in the graph consists of the nodes in $V$ and an additional source node $s$ and target node $t$, i.e.,
$\Vb = V \cup \{s,t\}$, as shown in Figure~\ref{dag.fig}.
The edge set $\Eb$  contains edges from $s$ to every node $v \in V$.  The weight of edge $(s,v)$ is
$\tr{{\Lzi{v}}^{-1}}$.  This edge weight is the total steady-state variance  for nodes $1, \ldots, (v-1)$ when
node $v$ is a leader node and there are no other leader nodes $u$ with $u < v$.  $\Eb$ also contains edges from from each node $u \in V$ to
each node $v \in V$ with $u < v$.  The weight of edge $(u,v)$ is $\tr{{\Lij{u}{v}}	^{-1}}$.  This weight is the total variance of the nodes
$i=(u+1), \ldots,(v-1)$ when nodes $u$ and $v$ are leaders and there are no other leader nodes $w$ with $u < w < v$.
Finally,  $\Eb$ contains edges from every node $v \in V$ to node $t$.  The weight of edge $(v,t)$ is $\tr{{\Ljn{v}}^{-1}}$.
This weight is the total variance of the nodes $i={v+1}, \ldots, n$ when $v$ is a leader and there are no other leader nodes $u$ with
$u > v$.  Note that the weights of edges $(s,1)$, $(n,t)$, and $(u,u+1),~u=1 \ldots (n-1)$ are 0.

We observe that the weight of a path of length $k+1$  from $s$ to $t$ in $\Gb$ is equivalent to $R(S)$ in (\ref{R1d.eq}),
where each vertex on the path, excepting $s$ and $t$, is an element of $S$.
Thus, to find a leader set $S$  with $|S| \leq k$ that minimizes (\ref{R1d.eq}),  one seeks a minimum-weight path from $s$ to $t$ in
$\Gb$ that has at most $k+1$ edges.  The optimal leaders  are the nodes along this path between $s$ and $t$.  To find this minimum-weight path,
we use a slightly modified implementation of the Bellman-Ford algorithm~\cite{CLRS10}.
The Bellman-Ford algorithm is an iterative algorithm that finds the minimum-weight paths (of any length) from a source node to every other node in the graph.
While there are  more efficient algorithms that solve this same problem, Bellman-Ford offers the benefit that, in each iteration $m$, the algorithm finds the minimum-weight paths of $m$ edges.
Therefore, we can execute the Bellman-Ford algorithm for $k+1$ iterations to find the minimum-weight path of at most $k+1$ edges.
We have made slight modifications to this algorithm to make it possible to retrieve not only the weight of the path but the list of nodes traversed in this path.
Our modified version of Bellman-Ford is detailed in the appendix.

The pseudocode for our $k$-leader selection algorithm is given in Algorithm~\ref{optpath.alg}.  Our algorithm returns the optimal set of leaders of size at most $k$.
A leader set may have cardinality $h < k$ if the inclusion of more than $h$ leaders does not decrease $R(S)$.

\begin{algorithm}
\caption{Algorithm for finding the optimal solution to the $k$-leader selection problem in a ring graph.}  \label{optring.alg}
\begin{algorithmic}
\State \textbf{Input:} $G = (V,E)$, edge weights $\frac{1}{\nu_{ij}}$, $k$
\State \textbf{Output:} Set of leader nodes $S$, error $R(S)$
~\\
\State $L \gets $ weighted Laplacian for $G$

\State $minWeight \gets \infty$
\State $minP \gets \bot$
\For{$i=1 \ldots n$}
\State  $\Gbb \gets$ digraph constructed from $L$ for first leader $i$
\State $(P,w) \gets \textsc{ModifiedBellmanFord}(\Gbb, s_i, t_i, k-1)$
\If{$w < minWeight$}
	\State $minWeight \gets w$
	\State $minP \gets P$
\EndIf
\EndFor
~\\
\State \textbf{Construct leader set from min. weight path:}
\State $S \gets \emptyset$
\For{$v \in minP$}
\If{$v \neq s$ \text{and} $v \neq t$}
\State $S \gets S \cup \{v\}$
\EndIf
\EndFor
\State $R(S) \gets minWeight$
\State \textbf{return} $(S, R(S))$
\end{algorithmic}
\end{algorithm}

\subsection{Leader Selection for a Ring Graph}

In a ring graph with $k$ leaders, the leader-follower system can be decomposed into $k$ independent subsystems (with some possibly consisting of zero nodes).
Each of these subsystems corresponds to a segment of the graph where two leader nodes form the boundaries of this segment. As before, we can define a weighted digraph
where the weight of edge $(u,v)$ is the total steady-state variance of the nodes on the segment between $u$ and $v$ when both $u$ and $v$ are leaders.
The total steady state-variance of the leader-follower system is then given by the weight of a path through this digraph. We now present the details of our $k$-leader selection algorithm for ring graphs. Pseudocode is given in Algorithm~\ref{optring.alg}.

To find the optimal leader set of size at most $k$, we first select one node $i$ as a leader.
We then translate the problem finding the remaining $k-1$ leaders into a problem of finding a minimum weight path of at most $k$ edges
over a weighted digraph.    The digraph construction is described below.
To ensure that our algorithm finds the optimal leader set, the algorithm performs this translation and path-finding for each possible initial leader $i$, $i= 1, \ldots, n$.
The optimal leader set is the set with the minimum weight path among these $n$ minimum weight paths (one for each initial leader selection $i$).

For a given initial leader $i$, we construct its weighted digraph
$\Gbb = (\Vbb, \Ebb, \Wbb)$ as follows.  The vertex set of $\Vbb$ contains a source node $s_i$, a target node $t_i$,
and the vertices in $V$ excepting $i$, i.e.,
\[
\Vbb = \{s_i, t_i\} \cup ( V \setminus \{i\}).
\]
The edge set $\Ebb$  contains directed edges from $s_i$ to every node $v \in ( V \setminus \{i\})$.
The weight of edge $(s_i,v)$ is the total steady-state variance for the nodes between $i$ and $v$ in the clockwise direction on the ring graph when both $i$ and $v$ are leaders and there are no other leader nodes after $i$ and  before $v$ in the clockwise direction.
$\Ebb$ also contains directed edges from every node $v \in (V \setminus \{i\})$ to $t_i$.
The weight of edge $(v,t_i)$ is the total steady-state variance of the nodes between $v$ and $i$ in the clockwise direction on the ring graph when both $i$ and $v$ are leaders and there are no other leader nodes after $v$ and before $i$ in the clockwise direction.
The weight of edge $(u,v)$ is the total variance of the nodes between $u$ and $v$ in the clockwise direction on the ring graph when both $u$ and $v$ are leaders
and there are no other leader nodes following $u$ and preceding $v$.

To compute the weight for edge $(u,v)$, where $u$ precedes $v$ on the ring in the clockwise direction,
 we first construct the matrix $M$ from $L$ by shifting the rows and columns of $L$ so that node
$u$ corresponds to the first row and column of $M$.
The index (row and column of $M$) corresponding to a node $u$ after this shift is $1$,
and the index corresponding to node $v$ is $v - u + 1$ modulo $n$.
The weight of edge $(u,v)$  is given by
\begin{equation} \label{wuv.eq}
w_{uv} = \tr{ {M_{(1, v - u+ 1 \Mod{n})}}^{-1}},
\end{equation}
where $M_{(j,k)}$ is is the sub-matrix of $M$ consisting of the rows and columns between $j+1$ and $k-1$, inclusive.
The weights of edges $(s_i,v)$ , $v \in (V \setminus \{i\})$ can be computed by replacing $u$ with $i$ in the above procedure.
The weights of edges $(u, t_i)$, $u \in (V \setminus \{i\})$ can be computing by replacing $v$ with $i$ in the above procedure.
Note that each of the sub-matrices used in this procedure is a tri-diagonal matrix.  Also, as with the path graph, the weight of an edge $(u,v)$ is 0
if nodes $u$ and $v$ are adjacent in $G$.

Once the graph $\Gbb$ is constructed, the modified Bellman-Ford algorithm is used to find the minimum-weight path from $s_i$ to $t_i$ of at most $k$ edges.
The weight of this path is the minimal $R(S)$ when $i \in S$ and $|S| \leq k$.  The leader set $S$ for this error consists of node $i$ and the nodes along this
path, excepting $s_i$ and $t_i$.  By finding the optimal leader that contains $i$, for each $i \in V$,
the algorithm is able to identify the initial leader $i$ that minimizes $R(S)$ and thus the optimal solution to the $k$-leader selection problem.

\subsection{Algorithm Analysis}
We now analyze the computational complexity of our leader selection algorithms.

\begin{theorem}
For a path graph $G$ with $n$ nodes, the $k$-leader selection algorithm identifies the leader set $S$, with $|S| \leq k$, that minimizes $R(S)$
in $O(n^3)$ time.
\end{theorem}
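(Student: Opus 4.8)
The plan is to prove two things at once, correctness and the running-time bound, with almost all of the effort going into the latter since correctness is essentially forced by the preceding construction. For correctness, I would first recall the decomposition of $\Merr{S}$ in (\ref{R1d.eq}) as a sum of traces of inverses of the tridiagonal diagonal blocks of $\Lff$. I would then observe that the digraph $\Gb$ was built precisely so that any $s$-to-$t$ path with exactly $k+1$ edges has total weight equal to $\Merr{S}$, where $S$ is the set of its interior vertices, and conversely every leader set of size at most $k$ corresponds to such a path (padding with edges of weight $0$ between adjacent nodes when $|S| < k$). Since the modified Bellman-Ford routine run for $k+1$ iterations returns a minimum-weight path using at most $k+1$ edges, its interior vertices form an optimal leader set with $|S| \leq k$.

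For the complexity I would split the work into two phases: building $\Gb$ (that is, computing all edge weights) and running the path-finding routine. The digraph has $|\Vb| = n+2$ vertices and $|\Eb| = O(n^2)$ edges, since there is one edge from $s$ to each node, one edge from each node to $t$, and one edge for each ordered pair $u < v$. Each edge weight is the trace of the inverse of a tridiagonal submatrix of $L$ of size at most $n$.

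The key step, and the one I expect to be the main obstacle, is to show that $\tr{T^{-1}}$ for a tridiagonal matrix $T$ of size $m$ can be computed in $O(m)$ time rather than the $O(m^3)$ that naive inversion of each block would require. I would invoke the standard fact that the diagonal entries of a tridiagonal inverse admit the closed form $(T^{-1})_{jj} = \theta_{j-1}\phi_{j+1}/\theta_m$, where $\theta_j$ and $\phi_j$ denote the determinants of the leading and trailing principal submatrices (with $\theta_0 = \phi_{m+1} = 1$). These determinants obey three-term recurrences, so all $\theta_j$ and $\phi_j$, and hence all $m$ diagonal entries and their sum, are obtained in $O(m)$ arithmetic operations. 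Summing the per-edge cost over the whole graph, the submatrix $\Lij{u}{v}$ for edge $(u,v)$ has size $v-u-1$, and $\sum_{u<v}(v-u-1) = O(n^3)$ (the $s$- and $t$-incident edges contribute only $O(n^2)$), so constructing $\Gb$ in full costs $O(n^3)$ and this term dominates.

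Finally I would bound the path-finding phase. Each iteration of Bellman-Ford relaxes every edge once at cost $O(|\Eb|) = O(n^2)$, and the routine is run for only $k+1$ iterations in order to restrict attention to paths of at most $k+1$ edges, giving $O(k\,n^2)$. Since a leader set cannot contain more than $n$ nodes we may assume $k \leq n$, so this phase is also $O(n^3)$. Adding the two phases yields the claimed $O(n^3)$ running time, completing the proof.
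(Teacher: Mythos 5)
Your proposal is correct and follows essentially the same route as the paper's proof: the identical two-phase accounting ($O(n^2)$ edges in $\Gb$, each weight obtained as an $O(n)$ trace of a tridiagonal inverse, hence $O(n^3)$ construction, plus $k+1 \leq n+1$ Bellman-Ford iterations at $O(n^2)$ each), the only difference being that you prove the linear-time tridiagonal fact explicitly via the $\theta_j$/$\phi_j$ determinant recurrences where the paper simply cites \cite{RH91}. One parenthetical nit: your zero-weight ``padding'' of short paths is both unnecessary (the modified Bellman-Ford already minimizes over paths of \emph{at most} $k+1$ edges) and not weight-preserving as stated, since inserting an extra leader adjacent to an existing one also changes the weight of the neighboring edge incident to it; but this aside does not affect the correctness of your argument.
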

\begin{proof}
For the path graph, our algorithm consists of two phases, each of which is performed once.  The first phase is the construction of the digraph $\Gb = (\Vb, \Eb)$.
The edge set $\Eb$ consists of $n$ edges with $s$ as their source (one to each $v \in V$), $n$ edges with $t$ as their sink (one from each $v \in V$),
and one edge from each $u \in V$ to each $v \in V$ with $u < v$.  Thus $| \Eb | \in O(n^2)$.
To find each edge weight, we must find the diagonal entries of the inverse of a tridiagonal matrix of size at most $(n-1) \times (n-1)$.
These diagonal entries can be found in $O(n)$ operations~\cite{RH91}.  Therefore the digraph $\Gb$ can be constructed in $O(n^3)$ operations.

The second phase of the algorithm is to find the shortest path of at most $k+1$ edges from $s$ to $t$ in $\Gb$.  For a graph with $m$ edges,
the Bellman-Ford algorithm finds the minimum-weight-path of length at most $h$  edges in $O(hm)$ operations~\cite{CLRS10}.
Therefore the second phase of our algorithm has complexity $O(kn^2)$.

Combining the two phases of the algorithm we arrive a computational complexity of $O(n^3)$.
\end{proof}
$ $\\
\begin{theorem}
For a ring graph $G$ with $n$ nodes, the  $k$-leader selection algorithm identifies the leader set $S$, with $|S| \leq k$, that minimizes $R(S)$
in $O(kn^3)$ time.
\end{theorem}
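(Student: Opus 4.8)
The plan is to reuse the two-phase decomposition from the proof of the preceding theorem — edge-weight computation followed by Bellman-Ford path-finding — while accounting for the fact that Algorithm~\ref{optring.alg} repeats the path-finding phase once for each of the $n$ candidate initial leaders. The danger is that naively rebuilding the digraph $\Gbb$ from scratch inside every iteration of the outer loop would cost $O(n^3)$ per iteration, for a total of $O(n^4)$. The observation that recovers the claimed $O(kn^3)$ bound is that the edge weights do not depend on the choice of initial leader $i$.

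First I would establish this independence. For any ordered pair $(u,v)$, the weight $w_{uv}$ of~(\ref{wuv.eq}) is the trace of the inverse of the tridiagonal sub-matrix associated with the follower nodes lying strictly between $u$ and $v$ in the clockwise direction; this quantity is a function only of the ring's Laplacian entries along that arc together with the boundary condition that $u$ and $v$ are leaders, and it never references $i$. The weights on the edges incident to $s_i$ and $t_i$ are themselves instances of the same quantity, namely $w_{iv}$ and $w_{vi}$, so as $i$ ranges over $V$ the source and target edges collectively reuse the weights of all ordered pairs. Hence every edge weight needed across all $n$ outer iterations can be computed a single time, before the loop. There are $n(n-1) \in O(n^2)$ ordered pairs, and by the result of~\cite{RH91} the diagonal of the inverse of each tridiagonal block is obtained in $O(n)$ operations, so this precomputation costs $O(n^3)$.

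Next I would bound the per-iteration cost with the weight table in hand. Assembling $\Gbb$ for a fixed $i$ reduces to selecting the admissible edges — those incident to $s_i$ and $t_i$, together with the internal edges $(u,v)$ whose clockwise arc avoids $i$ — and looking up their precomputed weights; each of the $O(n^2)$ candidate edges is handled in $O(1)$, so this step is $O(n^2)$. The modified Bellman-Ford routine then finds a minimum-weight path of at most $k$ edges from $s_i$ to $t_i$; since $\Ebb$ has $O(n^2)$ edges, the standard $O(hm)$ bound for $h$ iterations on $m$ edges~\cite{CLRS10} yields $O(kn^2)$ per invocation. Each of the $n$ outer iterations therefore runs in $O(kn^2)$.

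Summing the two phases gives $O(n^3) + n \cdot O(kn^2) = O(kn^3)$, as claimed; correctness is inherited from the digraph reduction already described, since the outer loop exhausts all $n$ choices of the first leader and so returns a global minimizer. I expect the only real obstacle to be the bookkeeping of the second paragraph: one must notice that the $O(n^3)$ weight computation is shared across all initial leaders rather than paid afresh in each of the $n$ iterations. Once the edge weights are hoisted out of the loop, the remaining analysis mirrors that of the path-graph algorithm, and the overall bound $O(kn^3)$ is dominated by the $n$ independent Bellman-Ford searches, each costing $O(kn^2)$.
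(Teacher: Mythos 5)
Your proposal is correct and follows essentially the same route as the paper's own proof: both hinge on the observation that the $O(n^2)$ pairwise edge weights are independent of the initial leader $i$ and can be precomputed once in $O(n^3)$ time (each weight via an $O(n)$ tridiagonal-inverse computation), after which each digraph $\Gbb$ is assembled in $O(n^2)$ with constant-time weight lookups and searched by Bellman-Ford in $O(kn^2)$, giving $n \cdot O(kn^2) = O(kn^3)$ overall. The hoisting of the shared weight table out of the outer loop, which you identify as the crux, is precisely the step the paper makes explicit by storing the weights in an $n \times n$ matrix.
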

\begin{proof}
In the ring algorithm, $n$ weighted digraphs $\Gbb$, $i=1,\ldots, n$ are constructed, and a shortest-path algorithm is executed on each digraph.
To construct these digraphs, first, the weight of each edge $(u,v)$, $u,v \in V, u \neq v$ is computed according to
according to (\ref{wuv.eq}).  For each computation, the shift operation can be performed in $O(n)$ to obtained a tridiagonal matrix.
The trace of the inverse of the sub-matrix can also be found in $O(n)$. Thus, the weight of all pairs $(u,v)$ can be computed in
$O(n^3)$.  These edge weights are used in every digraph $\Gbb$ and can be looked up in constant time (for example, by storing them in an $n \times n$ matrix).

To construct a digraph where edge weights can be computed in constant time requires $O(|V| + |E|)$ operations.  Therefore, each $\Gbb$ can be constructed in $O(n^2)$.
There are $n$ such digraphs, so the construction of all digraphs requires $O(n^3)$ time.

Finally, for each digraph, the Bellman-Ford algorithm finds the minimum-weight path of at most $k$ edges in $O(kn^2)$ time.  Thus, to find the minimum-weight path over these $n$ minimum-weight paths requires $O(kn^3)$ time.

Combining all steps of the algorithm, we obtain a running time of $O(kn^3)$.

\end{proof}

\section{Computational Examples} \label{examples.sec}
In this section, we explore the results of our $k$-leader selection algorithms on several example graphs.
For comparison, we also show results from the greedy leader selection algorithm presented in~\cite{CBP14}.

The greedy algorithm consists of at most $k$ iterations. In each iteration, a node $s$  is selected
such that $s = \arg \min_{s \in V \setminus S} \Merr{S \cup \{s\}}$.  If $\Merr{S \cup \{s\} }= \Merr{S}$,
then the algorithm returns $S$ (with less than $k$ leaders).  Otherwise, $s$ is added to $S$ to complete the iteration.
If the algorithm completes all $k$ iterations, then it returns the leader set $S$ containing $k$ elements.
 Pseudocode for the greedy leader selection algorithm is given in Algorithm~\ref{greedy.alg}.

\begin{algorithm}
\caption{Greedy leader selection algorithm.} \label{greedy.alg}
\begin{algorithmic}
\State \textbf{Input:} $G = (V,E)$, edge weights $\frac{1}{\nu_{ij}}$, $k$
\State \textbf{Output:} Set of leader nodes $S$
~\\
\State $S \gets \emptyset$, $i \gets 1$
\While{$i \leq k$}
\State $s \gets \arg \min_{v \in V \setminus S} \Merr{S \cup \{v\}}$
\If{$\Merr{S^*} - \Merr{S \cup \{s\}} \leq 0$}
\State \textbf{return $S$}
\Else
\State $S \gets S \cup \{s\}$
\EndIf
\State $i \gets i + 1$
\EndWhile
\State \textbf{return} $S$
\end{algorithmic}
\end{algorithm}

The greedy leader selection algorithm generates a  leader set $S$ of size at most $k$ such that,
\[
\Merr{S} \leq \left ( 1 - \left(\frac{k - 1}{k}\right)^k\right)\Mopt +  \frac{1}{e}\Mmax,
\]
where $\Mmax \eqdef \max_{i \in V} \Merr{i}$~\cite{CBP14}.
As far as we are aware, the greedy algorithm is the only previously proposed algorithm that gives provable guarantees
on the optimality of the leader set.

We have implemented both algorithms in Matlab.  For each graph, we select edge weights $\nu_{ij}$ uniformly at random
from the interval $(0,1)$.

\begin{figure}
\centering
        \begin{subfigure}{.9\linewidth}
            \includegraphics[scale=.42]{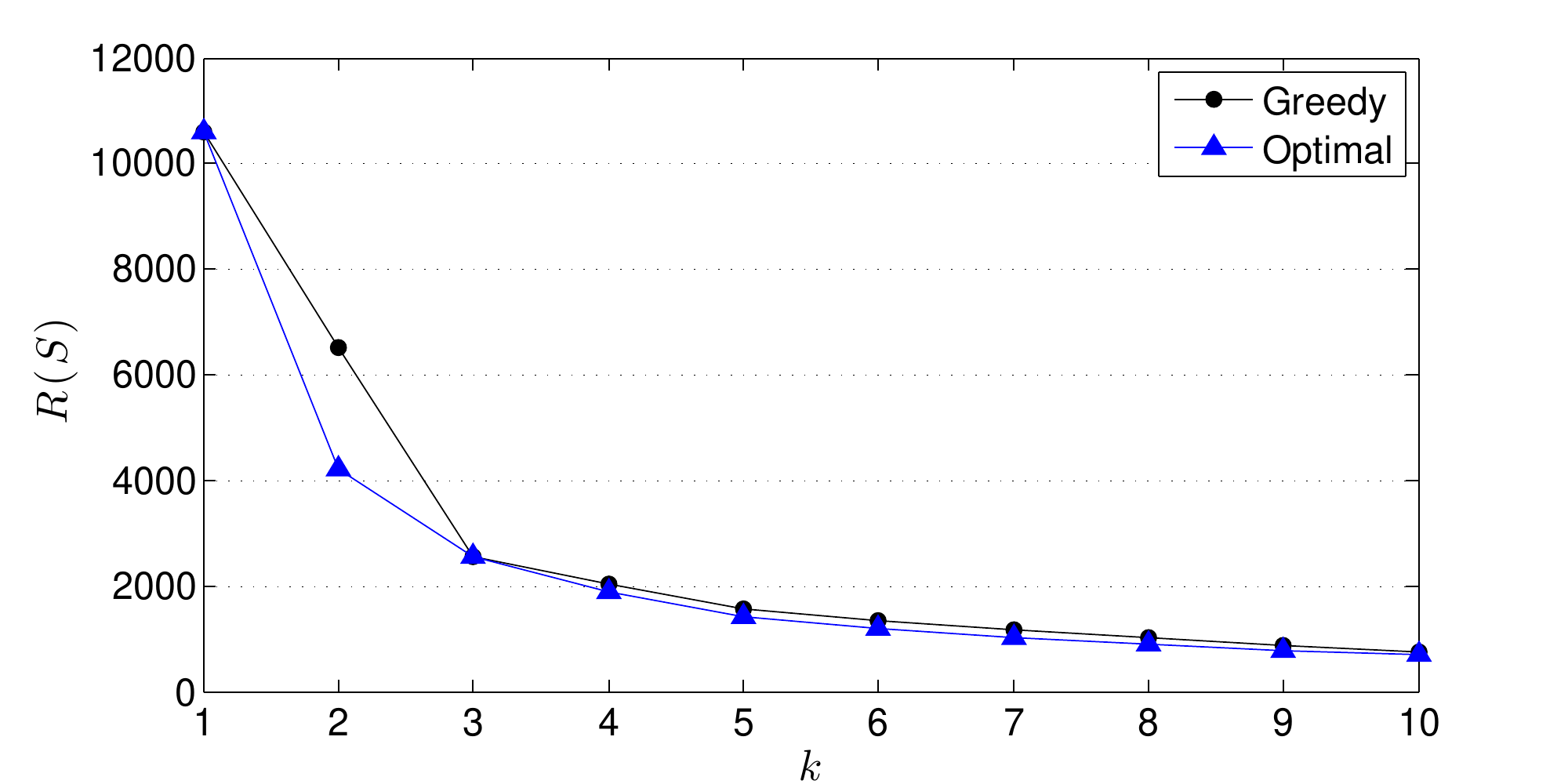}
                \caption{Total steady-state variance for leader sets of sizes $k=1,\ldots, 10$.}
                \label{lineabs.fig}
        \end{subfigure}%
        \\
        ~ 
        \begin{subfigure}{.9\linewidth}
              \includegraphics[scale=.42]{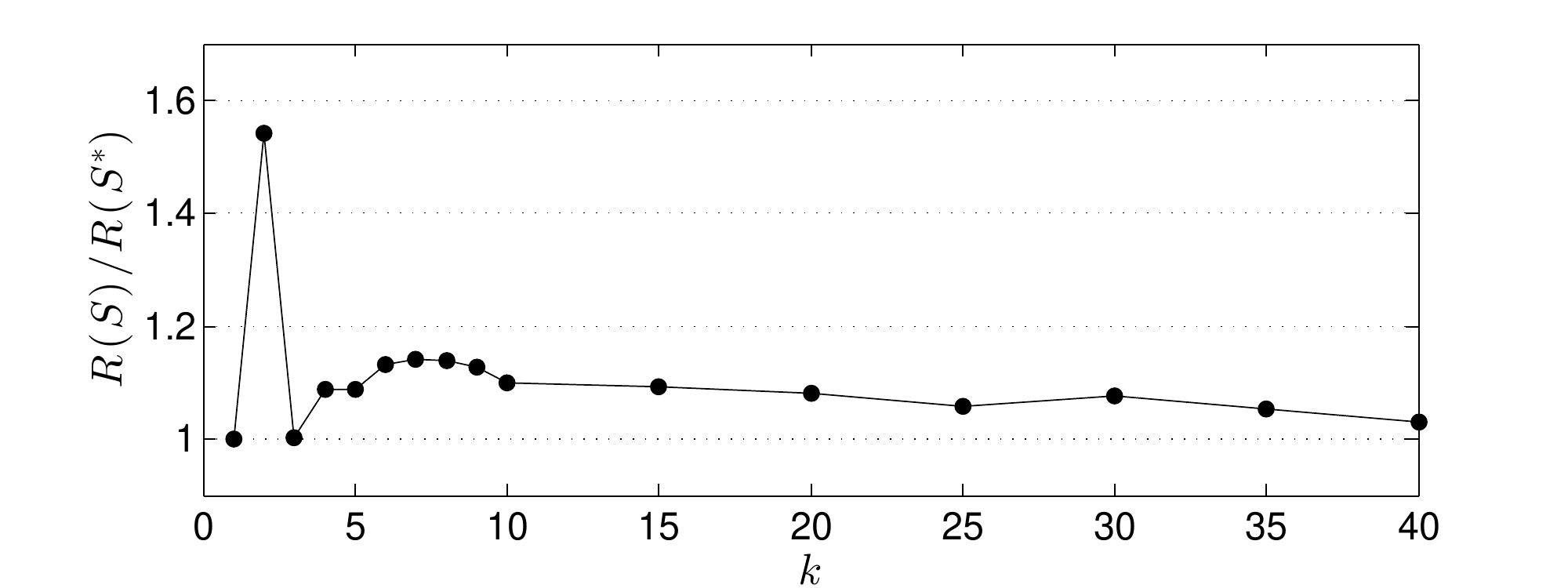}
                \caption{Total variance of leader set $S$ of size $k$ produced by greedy algorithm, relative to total variance for optimal leader set $S^*$ of size $k$.}\label{linerel.fig}
        \end{subfigure}
        \caption{A comparison of the our optimal leader selection algorithm with the greedy leader selection algorithm on a 400 node path graph.} \label{line.fig}
\end{figure}

The results for a 400 node path graph are shown Figure~\ref{line.fig}.
Figure~\ref{lineabs.fig} gives the total variance $R(S)$ for the greedy algorithm and our optimal algorithm for various leader set sizes $k$.
Figure~\ref{linerel.fig} shows total variance $R(S)$ of the leader set selected by the greedy algorithm relative to $R(S^*)$, where $S^*$ is the optimal leader set found by our algorithm.  Both algorithms find the optimal leader for $k=1$.   An interesting observation is that for $k=2$, the greedy algorithm demonstrates its worst relative performance, while for $k=3$, the greedy algorithm finds a leader set that is nearly optimal.  We believe that this is due to a kind of symmetry in path graphs.
For $k=1$, the optimal leader $\ell_1$ is the weighted median of the path graph (see \cite{P14}), and both algorithms select this leader.  For $k=2$, the optimal leaders are
nodes $\ell_2$ and $\ell_3$ near opposite  ends of the graph, (though not at the ends).  The greedy algorithm selects leader $\ell_1$, which is a poor choice for $k=2$, and one additional leader near the boundary of one side of the path graph, resulting in a larger total variance For $k=3$, the greedy algorithm is able to compensate for the poor choice of leader 1 by choosing a third leader near the other boundary of the graph.
We note that, overall the greedy algorithm yields leader sets whose performance is fairly close to optimal.  As the number of leaders increases,
the total variance increases for both leader selection algorithms.  The relative error of the greedy algorithm does not appear to vanish as $k$ increases.

\begin{figure}
\centering
        \begin{subfigure}{.9\linewidth}
            \includegraphics[scale=.42]{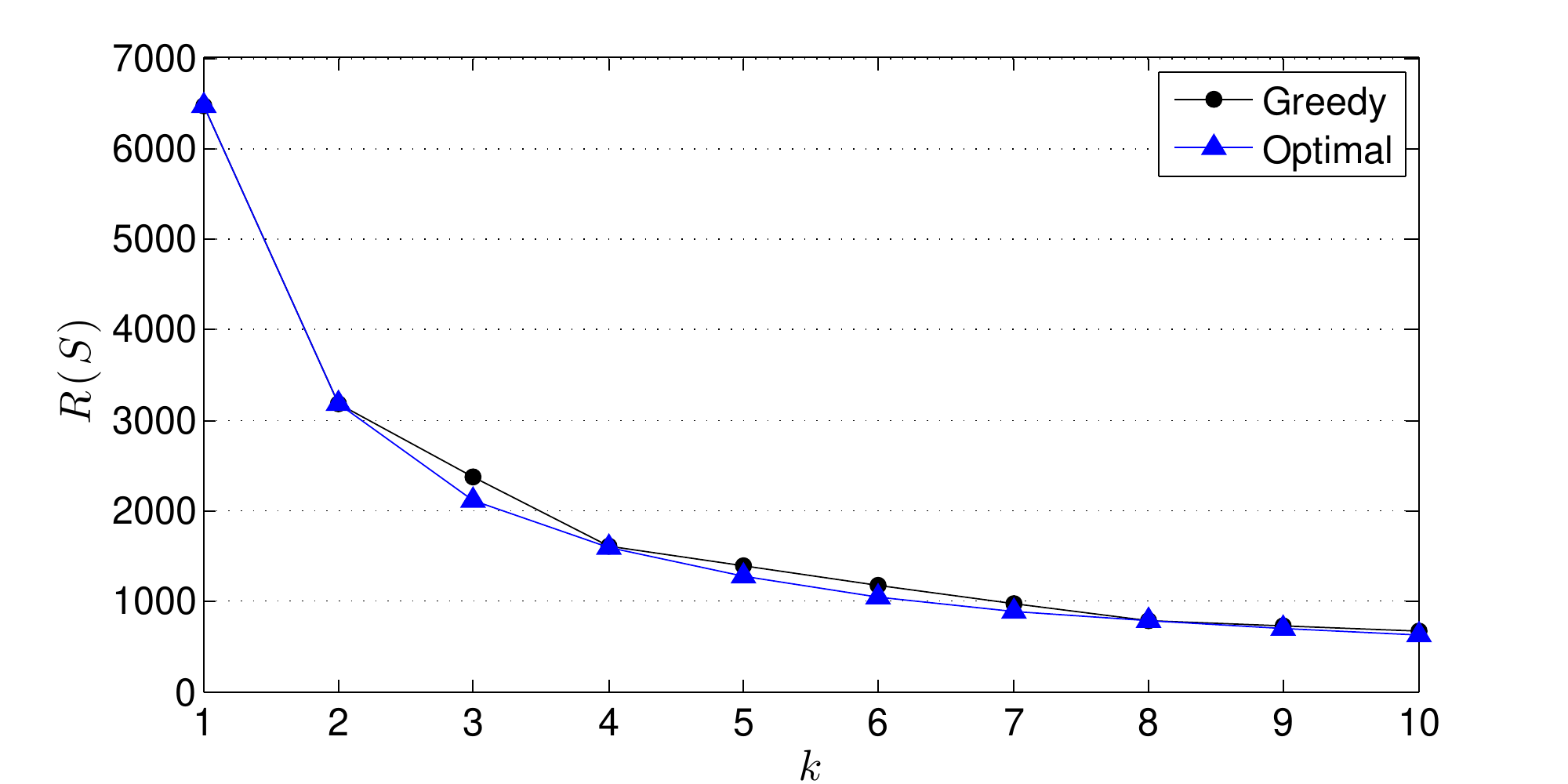}
                \caption{Total steady-state variance for leader sets of sizes $k=1,\ldots, 10$.}
                \label{ringabs.fig}
        \end{subfigure}%
        \\
        ~ 
        \begin{subfigure}{.9\linewidth}
              \includegraphics[scale=.42]{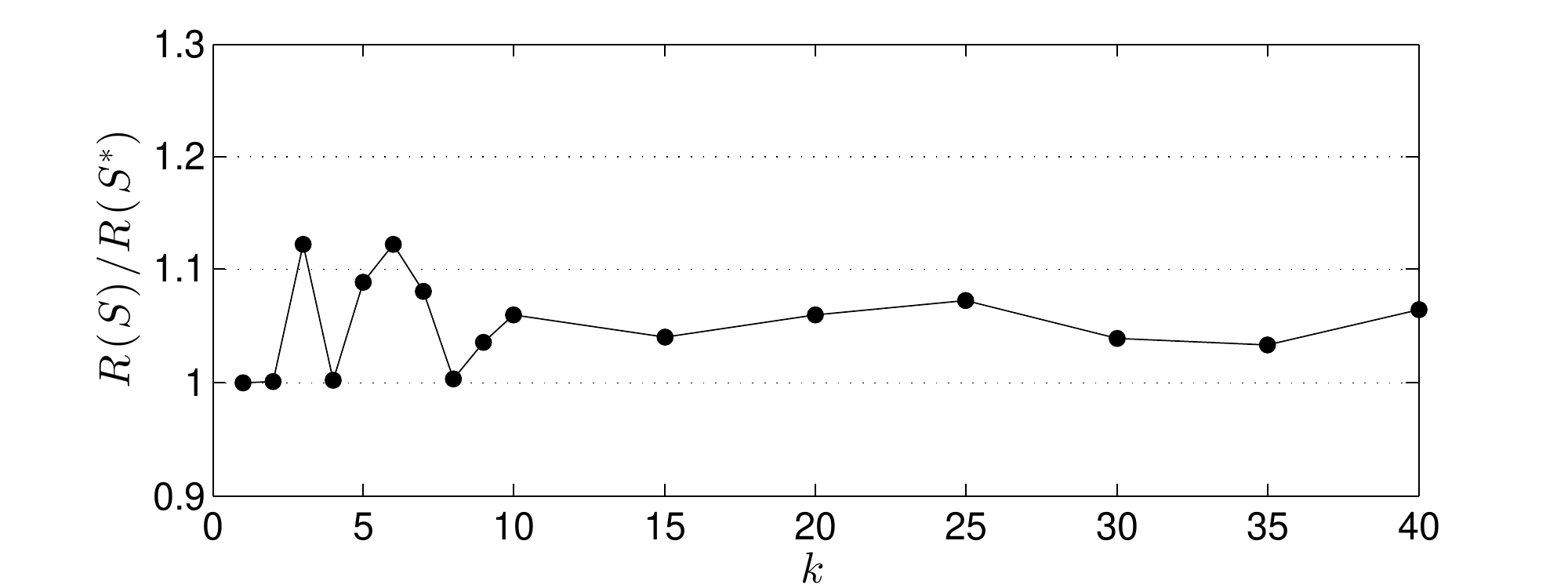}
                \caption{Total variance of leader set $S$ of size $k$ produced by greedy algorithm, relative to total variance for optimal leader set $S^*$ of size $k$.}\label{ringrel.fig}
        \end{subfigure}
        \caption{A comparison of the our optimal leader selection algorithm with the greedy leader selection algorithm on a 400 node ring graph.}\label{ring.fig}
\end{figure}

The results for a 400 node ring graph are shown in Figure~\ref{ring.fig}.
As before, Figure~\ref{ringabs.fig} gives the total variance $R(S)$ for the greedy algorithm and our optimal algorithm for various leader set sizes $k$,
and Figure~\ref{ringrel.fig} shows total variance $R(S)$ of the leader set selected by the greedy algorithm relative to $R(S^*)$, where $S^*$ is the optimal leader set as identified by our algorithm.
For  all $k$ greater than 1, the greedy algorithm selects a sub-optimal leader set.  The most interesting results are for smaller values of $k$.
The greedy algorithm exhibits worse performance when $k$ is odd than when $k$ is even.  We believe this is due to a similar phenomenon as observed when $k=2$ and $k=3$ in the path graph.
The greedy algorithm compensates for previous poor choices when an even numbered leader selected.
For larger values of $k$,
the performance of the greedy algorithm appears to stabilize at around 1.05 times the optimal $R(S^*)$.
While the greedy algorithm performs slightly worse for the ring graph than it does for the path graph, overall, the greedy solution yields good approximations for larger values of $k$.

\section{Conclusion} \label{conclusion.sec}
We have investigated the problem of optimal $k$-leader selection in noisy leader-follower consensus systems.
A naive solution to the leader selection problem has combinatorial complexity; however, it is unknown whether the leader selection problem
is NP-Hard, in general, or if efficient polynomial-time solutions can be found.
In this work, we have taken a step towards addressing this open question.
We have shown that, in one-dimensional weighted graphs, namely path graphs and ring graphs, the $k$-leader selection problem can be solved in polynomial time (in both $k$ and the network size $n$).
Further, we have given an $O(n^3)$ solution for optimal $k$-leader selection in path graphs and an $O(kn^3)$ solution for optimal $k$-leader selection in ring graphs.

It is our intuition that more efficient algorithms for leader-selection in one-dimensional networks can be found.  We plan to explore this in future work.
Further, we plan to extend our analysis and algorithm development to the problem of efficient $k$-leader selection in other network topologies. Finally, we will explore
using similar algorithmic techniques for leader selection in other dynamics, including networks where leaders are also subject to stochastic noise and networks in which there are no stochastic disturbances and the goal is to select leaders that maximize the convergence speed.

\section*{APPENDIX}
Pseudocode for our modified Bellman-Ford algorithm is given in Algorithm~\ref{mbf.alg}.
The input to the algorithm is a weighted digraph ${\cal G}=({\cal V}, {\cal E}, {\cal W})$, source and target nodes $s$ and $t$, and the maximum number of hops $H$ allowed in the path to be found.
The algorithm performs $H$ iterations; in each iteration $m$, it finds the minimum-weight path of exactly $m$ hops from node $s$ to every other node.
If no such path exists, the path weight is infinite. 
\begin{algorithm}[h]
\caption{Modified Bellman-Ford algorithm.}\label{mbf.alg}
\begin{algorithmic}
\State \textbf{Input:} weighted digraph ${\cal G} = ({\cal V},{\cal E},{\cal W})$, source node $s$,  target node $t$,
number of edges $H$
\State \textbf{Output:} List $P$ of vertices on min. weight path from $s$ to $t$ of at most $H$ edges, weight $w$ of path $P$
~\\
\State \emph{ /* $d_{i}(v)$ stores the weight of the minimum-weight path from node $s$ to node $v$ that contains exactly $m$ edges. */}
\State $d_{0}(s) \gets 0$
\State $d_{m}(s) \gets \infty$ for $m = 1\ldots k$
\State $d_{m}(v) \gets \infty$ for $v \in V, v \neq s, m = 0 \ldots k$
~\\
\State \emph{ /* $\pi_m(v)$ stores the ID of the predecessor of node $v$ in the minimum-weight path of $m$ edges. */ }
\State $\pi_{m}(v) \gets \perp$ for $v \in V, m = 0 . . . k$
~\\
\State \emph{/* Path $P$ initialized to empty list */}
\State $P \gets []$
~\\
\State \textbf{Find minimum-weight paths of all lengths up to $H$ from $s$ to all other nodes:}
\For{$m = 1 \ldots H$}
\For{$v \in V$}
\State $d_{m}(v) = \displaystyle \min_{u \in N(v)}\{d_{m-1}(u) + w(u,v) \}$
\State $\pi_{m}(v) =  \displaystyle  \arg\min_{u \in N(v)}\{d_{m-1}(u) + w(u,v) \}$
\EndFor
\EndFor
~\\
\State \textbf{Identify min. weight path of at most $H$ edges:}
\State $ L = H$
\While{$d_{L}(t) = d_{L-1}(t)$}
	\State $L \gets L-1$
\EndWhile
\State $P$.append($t$);
\State $u \gets \pi_L(t)$
\State $P$.append($u$)
\For{$i=(L-1) \ldots 2$}
	\State $u \gets \pi_{i}(u)$
	\State $P$.append($u$)
\EndFor
\State \textbf{return} $(P,d_L(t))$
\end{algorithmic}
\end{algorithm}

To find the path of length $m$ from  $s$ to a node $v$, the algorithm examines each node $u \in N(v)$ and finds the minimum over the weight of path from $s$ to $u$ of length $m-1$ plus the weight of edge $(u,v)$.   Each iteration of the algorithm requires a number of operations on the order of the number of edges in the graph.  After $H$ iterations, the algorithm finds the minimum-weight path from $s$ to $t$ of at most $H$ edges over the computed paths from $s$ to $t$ of $m$ edges, $m = 1\dots H$.
The running time of this algorithm is $O(H |{\cal E}|)$.  We refer the reader to~\cite{CLRS10} for more details.

%

\vspace{-.3cm}

\bibliographystyle{IEEETran}
\bibliography{kleader_ecc_submitted}

\end{document}